\numberwithin{equation}{section}
\newtheorem{thm}{Theorem}[section]
\newtheorem{cor}{Corollary}[section]
\newtheorem{problem}{Problem}[section]
\newtheorem{rem}{Remark}[section]
\begin{document}
\markboth{R. Rajkumar and P. Devi}{}
\title{Intersection graph of cyclic subgroups of groups}

\author{R. Rajkumar\footnote{e-mail: {\tt rrajmaths@yahoo.co.in}},\ \ \
P. Devi\footnote{e-mail: {\tt pdevigri@gmail.com}}\\
{\footnotesize Department of Mathematics, The Gandhigram Rural Institute -- Deemed University,}\\ \footnotesize{Gandhigram -- 624 302, Tamil Nadu, India.}\\[3mm]
}
\date{}
\maketitle

\begin{abstract}
Let $G$ be a group.
\textit{The intersection graph of cyclic subgroups of $G$}, denoted by $\mathscr I_c(G)$, is a graph having all the proper cyclic subgroups of
$G$ as its vertices and two distinct vertices  in $\mathscr I_c(G)$ are adjacent if and only if
their intersection is non-trivial. In this paper, we classify the finite groups whose intersection graph of cyclic subgroups is one of totally disconnected, complete, star, path, cycle. We show that for a given finite group $G$, $girth(\mathscr I_c (G)) \in \{3, \infty\}$. Moreover, we classify all finite non-cyclic abelian groups whose intersection graph of cyclic subgroups is planar.   Also for any group $G$, we determine the independence number, clique cover number of $\mathscr I_c (G)$ and show that $\mathscr I_c (G)$ is weakly $\alpha$-perfect. Among the other results, we determine the values of $n$ for which $\mathscr I_c (\mathbb{Z}_n)$ is regular and estimate its domination number.
\paragraph{Keywords:}Intersection graph, cyclic subgroups, girth, weakly $\alpha$-perfect, planar.
\paragraph{2010 Mathematics Subject Classification:} 05C25,  05C10,  05C17.

\end{abstract}

\section{Introduction} \label{sec:int} Associating graphs to algebraic structures and studying their properties using the methods of graph theory has been an interesting topic for mathematicians in the last decade. For a given family $\mathcal{F}=\{S_i~|~i\in I\}$ of sets, the intersection graph of $\mathcal{F}$ is a graph with the members of $\mathcal{F}$ as its vertices and two vertices $S_i$ and $S_j$ are adjacent if and only if $i\neq j$ and $S_i\cap S_j\neq \{\emptyset\}$. For the properties of these graphs and several special class of intersection graphs, we refer the reader to \cite{mckee}. In the past fifty years,
it has been a growing interest among mathematicians, when the members of $\mathcal{F}$ have some specific algebraic
structures.
In 1964 Bosak~\cite{bosak} defined the intersection graphs of semigroups. Motivated by this,
Cs$\acute{a}$k$\acute{a}$ny and Poll$\acute{a}$k ~\cite{csak} defined the intersection graph of subgroups of a finite group.

Let $G$ be a group.  \textit{The intersection graph of subgroups of $G$}, denoted by $\mathscr{I}(G)$, is a graph having all the
proper subgroups of $G$ as its vertices and two distinct vertices in $\mathscr{I}(G)$ are adjacent if and only if
the intersection of the corresponding subgroups is non-trivial.

Some properties of the intersection graphs of subgroups of finite abelian groups were studied by Zelinka in~\cite{zelinka}. Inspired by these, there are several papers appeared in the literature which have studied the intersecting graphs on algebraic structures, viz., rings and modules. See, for instance \cite{ akbari_2,  Chakara,  raj_5, Shen, yaraneri} and the references therein.

In this paper, for a given group $G$, we define
\textit{the intersection graph of cyclic subgroups of $G$}, denoted by $\mathscr I_c(G)$, is a graph having all the proper cyclic subgroups of
$G$ as its vertices and two distinct vertices  in $\mathscr I_c(G)$ are adjacent if and only if
their intersection is non-trivial. Clearly $\mathscr I_c(G)$ is a subgraph of $\mathscr I (G)$ induced by all the proper cyclic subgroups of $G$.
When $G$ is cyclic, then $\mathscr I_c(G)$ and $\mathscr I (G)$ are the same. In \cite{Chakara}, Chakrabarty \emph{et al} have studied several properties of intersection graphs of subgroups of  cyclic groups.

Now we recall some basic definitions and notations of graph theory. We use the standard terminology of graphs (e.g., see \cite{harary}).
 Let $G$ be a graph. We denote  the degree of
a vertex $v$ in $G$ by $deg(v)$.  A graph whose edge set is empty is called a \emph{null} graph or \emph{totally disconnected} graph.  $K_n$ denotes the complete graph on $n$ vertices. $K_{m,n}$ denotes the complete bipartite graph with one partition consists of $m$ vertices and other partition consists of $n$ vertices. In particular $K_{1,n}$ is called a \emph{star}. $P_n$ and $C_n$ respectively denotes the path and cycle with $n$ edges. A graph is \emph{regular} if all the vertices have the same degree. A graph is \emph{planar} if it can be drawn in a plane such that no two edges intersect except (possibly) at their end vertices. The girth of $G$, denoted by $girth(G)$, is the length of its shortest cycle, if it exist; other wise $girth(G) = \infty$. An \emph{independent set} of $G$ is a subset of $V(G)$ having no
two vertices are adjacent. The \emph{independence number} of $G$, denoted by $\alpha(G)$, is the
cardinality of the largest independent set. A \emph{clique} of  $G$ is a complete subgraph of $G$. The \emph{clique cover number} of $G$, denoted by $\theta(G)$, is the minimum number of cliques in $G$ which cover all the vertices of $G$. $G$ said to be \emph{weakly $\alpha$-perfect} if $\alpha(G)=\theta(G)$. A \emph{dominating set} of $G$  is a subset $D$ of $V(G)$ such that every vertex not in $D$ is adjacent to at least one member of $D$. The \emph{domination number} $\gamma(G)$ is the number of vertices in a smallest dominating set for $G$.

In this paper, we classify the finite groups whose intersection graph of cyclic subgroups is one of totally disconnected, complete, star graph, cycle, path and we give a characterization for groups whose intersection graph of cyclic subgroups is one of acyclic, bipartite, $C_3$-free. We show that for a given finite group $G$, $girth(\mathscr I_c (G)) \in \{3, \infty\}$. Moreover, we classify all finite non-cyclic abelian groups whose intersection graph of cyclic subgroups is planar.   Also we determine the independence number, clique cover number of $\mathscr I_c (G)$ for any group $G$. As a consequence, we show that $\mathscr I_c (G)$ is weakly $\alpha$-perfect. Finally, we determine the values of $n$ for which $\mathscr I_c (\mathbb{Z}_n)$ is regular and estimate its domination number.

Throughout this paper $p$, $q$, $r$ denotes the distinct prime numbers.

\section{Main Results}

\begin{thm}\label{cyclic intersection graph 13}
Let $G_1$ and $G_2$ be two groups. If $G_1\cong G_2$, then $\mathscr I_c(G_1)\cong \mathscr I_c(G_2)$.
\end{thm}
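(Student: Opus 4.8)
The plan is to transport a group isomorphism to a graph isomorphism by showing it respects the vertex set (proper cyclic subgroups) and the adjacency relation (nontrivial intersection). Let $\varphi\colon G_1\to G_2$ be a group isomorphism. First I would establish the bijection on vertices: for any subgroup $H\le G_1$, the image $\varphi(H)$ is a subgroup of $G_2$, and $\varphi$ restricts to an isomorphism $H\cong\varphi(H)$. Since a group is cyclic if and only if every group isomorphic to it is cyclic, $H$ is cyclic precisely when $\varphi(H)$ is cyclic; and $H$ is a \emph{proper} subgroup of $G_1$ precisely when $\varphi(H)$ is a proper subgroup of $G_2$ (because $\varphi(G_1)=G_2$ and $\varphi$ is injective). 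Hence the assignment $H\mapsto\varphi(H)$ maps $V(\mathscr I_c(G_1))$ into $V(\mathscr I_c(G_2))$, and the same argument applied to $\varphi^{-1}$ gives a two-sided inverse, so this is a bijection between the vertex sets.

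Next I would verify that this bijection preserves and reflects adjacency. Define $\Phi\colon V(\mathscr I_c(G_1))\to V(\mathscr I_c(G_2))$ by $\Phi(H)=\varphi(H)$. The crucial point is that $\varphi$, being injective, commutes with intersection: for cyclic subgroups $H,K\le G_1$ one has $\varphi(H\cap K)=\varphi(H)\cap\varphi(K)$. The inclusion $\varphi(H\cap K)\subseteq\varphi(H)\cap\varphi(K)$ is immediate; for the reverse, if $y\in\varphi(H)\cap\varphi(K)$ then $y=\varphi(h)=\varphi(k)$ for some $h\in H$, $k\in K$, and injectivity forces $h=k\in H\cap K$, so $y\in\varphi(H\cap K)$. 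Consequently $H\cap K$ is nontrivial if and only if $\varphi(H\cap K)=\varphi(H)\cap\varphi(K)$ is nontrivial (here using $\varphi(\{e_{G_1}\})=\{e_{G_2}\}$ together with injectivity, so the image is trivial exactly when the preimage is). Thus $H$ and $K$ are adjacent in $\mathscr I_c(G_1)$ if and only if $\Phi(H)$ and $\Phi(K)$ are adjacent in $\mathscr I_c(G_2)$.

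Combining the two parts, $\Phi$ is a bijection of vertex sets that preserves adjacency in both directions, which is exactly a graph isomorphism $\mathscr I_c(G_1)\cong\mathscr I_c(G_2)$. The argument is essentially routine; the only point that requires genuine care rather than mere bookkeeping is the commutation of $\varphi$ with intersection, where injectivity is indispensable (for a non-injective homomorphism the reverse inclusion can fail). I would therefore foreground the identity $\varphi(H\cap K)=\varphi(H)\cap\varphi(K)$ as the key lemma and keep the rest brief.
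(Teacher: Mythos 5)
Your proof is correct and follows the same approach as the paper: the paper defines the map $H\mapsto f(H)$ from a group isomorphism $f$ and simply asserts it is a graph isomorphism, while you supply the details (properness and cyclicity are preserved, and $\varphi(H\cap K)=\varphi(H)\cap\varphi(K)$ by injectivity) that the paper leaves as "easy to see."
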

\begin{proof}
Let $f:G_1\rightarrow G_2$ be a group isomorphism. Define a map $\psi:V(\mathscr I_c(G_1))\rightarrow V(\mathscr I_c(G_2))$ by $\psi(H)=f(H)$, for every
$H\in V(\mathscr I_c(G_1))$.
It is easy to see that $\psi$ is  a graph isomorphism.
\end{proof}

\begin{rem}
The converse of Theorem~\ref{cyclic intersection graph 13} is not true. For example, let $G_1 \cong \mathbb Z_{p^5}$ and
$G_2 \cong Q_8 =\langle a,b|a^{4}=b^4=1,b^2=a^2,ab=ba^{-1}\rangle$. Here $H_i$, $i=1$, 2, 3, 4 are subgroups of $G_1$ of orders $p^i$ respectively; $H_1$ is a subgroup of $H_i$, $i=2$, 3, 4. $\langle a\rangle$, $\langle b\rangle$, $\langle ab\rangle$, $\langle a^2\rangle$ are
the subgroups of $G_2$; $\langle a^2\rangle$ is a subgroup of $\langle a\rangle$, $\langle b\rangle$, $\langle ab\rangle$.
It follows that $\mathscr I_c(G_1)=K_4=\mathscr I_c(G_2)$, but $G_1\ncong G_2$.
\end{rem}

\begin{thm}\label{cyclic intersection graph 14}
Let $G$ be a group. Then $\mathscr I_c(G)$ is totally disconnected if and only if the order of every element of $G$ is prime.
\end{thm}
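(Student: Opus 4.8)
The plan is to read total disconnectedness off the subgroup lattice: $\mathscr I_c(G)$ has no edge precisely when no two distinct proper cyclic subgroups of $G$ meet nontrivially, and this is governed by whether every nontrivial cyclic subgroup is an atom of the lattice, i.e.\ has prime order. I would move freely between ``every element of $G$ has prime order'' and ``every nontrivial cyclic subgroup of $G$ has prime order,'' as these are equivalent (each cyclic subgroup is $\langle g\rangle$ for a single element $g$, whose order equals $|\langle g\rangle|$).

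For the implication ``prime orders $\Rightarrow$ totally disconnected,'' suppose every nonidentity element of $G$ has prime order, so every nontrivial cyclic subgroup has prime order and hence has only the subgroups $\{e\}$ and itself. If two distinct proper cyclic subgroups $H_1,H_2$ were adjacent, i.e.\ $H_1\cap H_2\neq\{e\}$, then both would be nontrivial, hence of prime order, and the nontrivial subgroup $H_1\cap H_2$ of $H_1$ would have to equal $H_1$; likewise it would equal $H_2$, forcing $H_1=H_2$. This contradiction shows $\mathscr I_c(G)$ has no edges, so this direction is routine.

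For the converse I would argue contrapositively: assuming some nonidentity $g\in G$ has non-prime order, I exhibit an edge. If $g$ has infinite order, then $\langle g^2\rangle$ and $\langle g^3\rangle$ are distinct cyclic subgroups, both proper already in $\langle g\rangle$ and both containing $g^6\neq e$, hence adjacent. If $g$ has finite composite order $n$, I choose a prime $p\mid n$ with $p<n$ and set $K:=\langle g^{\,n/p}\rangle$; then $K$ has order $p$, lies properly inside $\langle g\rangle$, and $K\cap\langle g\rangle=K\neq\{e\}$, so $K$ and $\langle g\rangle$ are adjacent.

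The step I expect to be the crux is ensuring that both subgroups produced in the composite case are genuine vertices, that is, proper subgroups of $G$. The subgroup $K$ always is, but $\langle g\rangle$ is a vertex only when $\langle g\rangle\neq G$; the delicate case is therefore $G=\langle g\rangle$ cyclic, where this edge disappears and one must instead locate two \emph{proper} subgroups of $G$ that overlap nontrivially. This comes down to the divisor structure of $n=|G|$: two proper nontrivial subgroups meet nontrivially exactly when $n$ has two proper divisors sharing a prime factor, which is automatic once $n$ is divisible by $p^3$ or by $p^2q$. The genuinely tight orders are $n=p^2$ and $n=pq$, corresponding to $G$ cyclic, and I would scrutinize these directly; organizing the composite-order analysis around this dichotomy is the main technical point.
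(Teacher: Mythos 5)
Your proposal follows the paper's own outline in both directions: the forward implication is the same lattice observation (the paper dismisses it as obvious, you spell it out), and the converse is the paper's one-line idea that an element $g$ of non-prime order makes $\langle g\rangle$ adjacent to a proper nontrivial subgroup of itself. You go beyond the paper in treating infinite-order elements, which the paper's proof silently omits even though the theorem is stated for arbitrary groups. The real divergence is the crux you flagged: the paper's converse tacitly assumes that $\langle g\rangle$ is a vertex, i.e.\ that $\langle g\rangle\neq G$, and never addresses what happens when the composite-order element generates $G$.

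You were right to be suspicious, but the two cases you set aside ``to scrutinize directly'' cannot be closed, because the theorem is false there. For $G\cong\mathbb{Z}_{p^2}$ the only proper nontrivial cyclic subgroup is the one of order $p$, so $\mathscr I_c(G)\cong K_1$, whose edge set is empty; for $G\cong\mathbb{Z}_{pq}$ the two proper nontrivial subgroups have orders $p$ and $q$ and meet trivially, so $\mathscr I_c(G)\cong \overline{K}_2$. By the paper's own definition (``a graph whose edge set is empty is totally disconnected''), both graphs are totally disconnected, yet both groups contain an element of composite order ($p^2$ and $pq$ respectively); note also that the paper's Corollary~2.2 gives $\mathscr I_c(\mathbb{Z}_{p^\alpha})\cong K_{\alpha-1}$, which is $K_1$ exactly when $\alpha=2$. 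So the ``only if'' direction fails precisely at the two orders you isolated. The defect is in the statement and in the paper's proof --- which hides the properness issue by asserting adjacency of $\langle g\rangle$ ``with its proper cyclic subgroups'' without checking that $\langle g\rangle$ is itself a vertex --- not in your analysis; your proposal is as complete as any proof of this statement can be, and finishing your scrutiny honestly yields counterexamples rather than a proof. A correct version must exclude cyclic groups of order $p^2$ and $pq$ (equivalently, restrict to groups in which every element of composite order generates a proper subgroup), and the same exceptions propagate to the paper's Corollary~2.1, which is derived from this theorem.
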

\begin{proof}
If the order of every element of $G$ is prime, then obviously proof follows.
Suppose $G$ has an element of composite order, then the subgroup generated by that element is adjacent with its proper cyclic subgroups in $\mathscr I_c(G)$ and so $\mathscr I_c(G)$ is not totally disconnected. This completes the proof.
\end{proof}

\begin{cor}\label{cyclic intersection graph 1}
Let $G$ be a finite group. Then $\mathscr I_c(G)$ is totally disconnected if and only if $G$ is one of $p$-group with exponent $p$ or nilpotent group of
order $p^\alpha q$ or $A_5$.
\end{cor}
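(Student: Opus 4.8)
The plan is to translate the statement into a purely group-theoretic problem using Theorem~\ref{cyclic intersection graph 14}: the graph $\mathscr I_c(G)$ is totally disconnected precisely when every non-identity element of $G$ has prime order. So I would reduce to classifying the finite groups in which every element has order $1$ or a prime, and then match the resulting groups against the three families in the statement.

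For the sufficiency direction I would simply read off the element orders in each family. In a $p$-group of exponent $p$ every non-identity element has order $p$; in $A_5$ the element orders are $1,2,3,5$, all primes. For a group $G$ of order $p^\alpha q$ of the type occurring here I would use its Frobenius structure: the kernel is a normal Sylow $p$-subgroup $K$ of exponent $p$ and each complement has order $q$, so every element lies either in $K$ (order $p$) or in a conjugate of a complement (order $q$), and hence has prime order. This direction is routine once the structure is known.

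For necessity, suppose every non-identity element of $G$ has prime order. The basic observation I would record first is that two commuting elements of coprime prime orders $p\neq q$ would produce an element of composite order $pq$; hence elements of distinct prime orders never commute, and consequently $C_G(x)$ is a $p$-group whenever $x$ has order $p$. I would then split on the number of primes dividing $|G|$. If $|G|=p^n$, every element has order $p$, so $G$ is a $p$-group of exponent $p$. If exactly two primes $p,q$ divide $|G|$ and $G$ is solvable, I would take a minimal normal subgroup, which is an elementary abelian $p$-group $K$ of exponent $p$, and use the centralizer observation to show that every $q$-element acts fixed-point-freely on $K$; this exhibits $G$ as a Frobenius group. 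Since a Frobenius complement has cyclic or generalized quaternion Sylow subgroups and here has exponent $q$, it must be cyclic of order $q$, so $|G|=p^\alpha q$ with $K$ as kernel. An induction on $|G|$ then shows that a solvable group with the property involves at most two primes, so any $G$ with three or more prime divisors must be non-solvable.

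The main obstacle is the non-solvable case, which absorbs both $A_5$ and any group with three or more prime divisors. Here the elementary Sylow and Frobenius arguments no longer suffice, and I expect to rely on the classification of the finite simple groups all of whose elements have prime-power order (Suzuki's CP-groups), tightened by demanding prime rather than prime-power order. Concretely I would pass to a minimal normal subgroup, which is a direct product of isomorphic non-abelian simple groups each inheriting the prime-order property, thereby reducing to identifying the simple groups with this property; one then eliminates every simple CP-group except $A_5$. For instance, for $\mathrm{PSL}(2,2^f)$ the condition forces both $2^f-1$ and $2^f+1$ to be prime, which holds only at $f=2$, giving $G\cong\mathrm{PSL}(2,4)\cong A_5$, while every other simple CP-group contains an element of composite order. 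This part, resting on deep classification results rather than the self-contained arguments used in the other cases, is where I expect the real difficulty to lie.
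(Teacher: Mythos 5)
The paper's own proof of this corollary is a two-line citation: it quotes Deaconescu's classification \cite{marian} of the finite groups in which every element has prime order and combines it with Theorem~\ref{cyclic intersection graph 14}. Your proposal performs the same reduction but then undertakes to re-prove Deaconescu's theorem from scratch, so the comparison is between a citation and a full derivation. One substantive point your sufficiency argument exposes: the statement describes the second family as a ``nilpotent group of order $p^{\alpha}q$'', but a nilpotent group of order $p^{\alpha}q$ is the direct product of its Sylow subgroups and therefore always contains an element of order $pq$; the family in \cite{marian} --- and the one you actually verify --- consists of the \emph{Frobenius} groups of order $p^{\alpha}q$ with kernel of exponent $p$ and complement of order $q$. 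Your tacit replacement of ``nilpotent'' by this Frobenius structure is mathematically the right move, but it should be flagged, since the corollary's ``if'' direction is false as literally stated.

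As a proof, however, your necessity argument has genuine gaps at three places. (i) In the solvable two-prime case, ``every $q$-element acts fixed-point-freely on the minimal normal subgroup $K$; this exhibits $G$ as a Frobenius group \dots with $K$ as kernel'' is a leap: fixed-point-freeness on $K$ only makes $K Q$ Frobenius over $K$, and your conclusion silently assumes that $K$ is the full Sylow $p$-subgroup and that it is normal with a complement meeting it trivially under conjugation. Compare $S_4$: its $3$-elements act fixed-point-freely on the minimal normal subgroup $V_4$, yet $S_4$ is not a Frobenius group with kernel $V_4$, and it fails the prime-order property exactly because of order-$4$ elements sitting above $V_4$; ruling out this phenomenon is where the real work lies. (ii) The claim that ``an induction on $|G|$'' shows a solvable group with the property involves at most two primes has no visible inductive step, because the prime-order property is not (obviously) inherited by quotients. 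A workable substitute, using tools you already invoke: a Hall $p'$-subgroup $H$ acts fixed-point-freely on $K$, hence is a Frobenius complement; Frobenius complements have every subgroup of order $rs$ cyclic, while your hypothesis forbids cyclic subgroups of composite order, and this forces $|H|$ to be prime, yielding $|G|=p^{\alpha}q$ in one stroke. (iii) In the non-solvable case you assert that a minimal normal subgroup is a product of non-abelian simple groups; that is false for non-solvable groups in general (it can be elementary abelian), so you must first show the Fitting subgroup is trivial. Moreover, after identifying a simple minimal normal subgroup $N\cong A_5$ via Suzuki's list, you still owe the step $G=N$: an element of prime order centralizing $N$ would commute with elements of orders $2$, $3$ and $5$, so $C_G(N)=1$, whence $G$ embeds in $\mathrm{Aut}(A_5)\cong S_5$, and $S_5$ is excluded by its elements of order $6$. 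None of these gaps is fatal --- they are all filled by standard arguments --- but filling them is precisely the content of the paper \cite{marian} that the authors chose to cite rather than reproduce.
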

\begin{proof} It is proved in \cite{marian} that  in a finite group, every element of it is of prime order if and only if it is one of $p$-group with exponent $p$ or nilpotent group of order
$p^\alpha q$ or $A_5$. This fact together with Theorem~\ref{cyclic intersection graph 14} completes the proof.
\end{proof}
\begin{rem}\label{rem 1}
Tarski Monster's group is an infinite non-abelian group in which every proper subgroup is of order a fixed prime $p$. The existence of such a group was given by Olshanski in~\cite{olshan}. This shows the validity of Theorem~\ref{cyclic intersection graph 14} for infinite groups.
\end{rem}

\begin{thm}\label{cyclic intersection graph 15}
Let $G$ be a group. Then $\mathscr I_c(G)$ is complete if and only if $G$ has a unique subgroup of prime order.
\end{thm}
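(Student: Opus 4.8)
The plan is to prove the two implications separately, in each case reducing the global statement about $\mathscr I_c(G)$ to a statement about the subgroups of $G$ of prime order.

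For the ``if'' direction, suppose $G$ has a unique subgroup $P$ of prime order, and I want every two distinct vertices of $\mathscr I_c(G)$ to be adjacent. The observation I would establish first is that \emph{every} non-trivial proper cyclic subgroup $H$ of $G$ contains $P$. Indeed, a generator of $H$ has finite order $m>1$ (here I am using that $H$ is finite), so for any prime $\ell\mid m$ the cyclic group $H$ has a subgroup of order $\ell$; this is a subgroup of $G$ of prime order, so by uniqueness it must equal $P$, whence $P\leq H$. Once this is in hand the conclusion is immediate: any two vertices $H_1,H_2$ both contain $P$, so $H_1\cap H_2\supseteq P\neq\{e\}$, i.e.\ they are adjacent, and $\mathscr I_c(G)$ is complete.

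For the ``only if'' direction I would argue contrapositively: assuming $G$ does \emph{not} have a unique subgroup of prime order, I produce two non-adjacent vertices. Since $G$ is finite and non-trivial, Cauchy's theorem supplies at least one subgroup of prime order, so the failure of uniqueness means there are two \emph{distinct} subgroups $P_1\neq P_2$ of prime order. Each has prime order, hence is cyclic, and each is proper (if one of them were all of $G$, then $G$ would have prime order and thus only one subgroup of prime order, contrary to assumption), so both are vertices of $\mathscr I_c(G)$. Because $P_1\cap P_2$ is a proper subgroup of the prime-order group $P_1$, it is trivial; hence $P_1$ and $P_2$ are non-adjacent and $\mathscr I_c(G)$ is not complete.

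The group-theoretic bookkeeping is routine; the step carrying the real weight is the claim in the ``if'' direction that every non-trivial cyclic subgroup contains $P$, since that is precisely where finiteness enters, through Cauchy's theorem and the fact that a finite cyclic group has a unique subgroup of each order. This is also where the hypothesis cannot be relaxed to arbitrary groups: an infinite cyclic subgroup contains no subgroup of prime order and so would meet $P$ trivially (for instance $\mathbb Z\times\mathbb Z_2$ has a unique subgroup of prime order, yet $\mathscr I_c$ of it is not complete). I would therefore run the argument for finite $G$, and dispose of the degenerate cases separately, namely the trivial group and groups of prime order, whose graphs have at most one vertex and so are vacuously complete.
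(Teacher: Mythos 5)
Your proof is correct, and its core is the same as the paper's: for the ``if'' direction, the unique prime-order subgroup lies inside every vertex, so all vertices are pairwise adjacent; for the ``only if'' direction, two distinct prime-order subgroups are non-adjacent vertices. What you add beyond the paper is the explicit justification of the containment claim (via the order of a generator and Cauchy/the subgroup structure of finite cyclic groups), and---more importantly---the observation that this is exactly where finiteness enters. The paper states the theorem for an arbitrary group $G$, and its proof simply asserts that ``every subgroup of $G$ contains that subgroup,'' which is false in general: as you note, $\mathbb Z\times\mathbb Z_2$ has the unique prime-order subgroup $\langle(0,1)\rangle$, yet the vertex $\langle(1,0)\rangle$ meets it trivially, so $\mathscr I_c(\mathbb Z\times\mathbb Z_2)$ is not complete. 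The ``only if'' direction fails for infinite groups as well: $\mathscr I_c(\mathbb Z)$ is complete, since $m\mathbb Z\cap n\mathbb Z\neq\{0\}$, but $\mathbb Z$ has no subgroup of prime order at all---your appeal to Cauchy's theorem to secure existence is likewise a finite-group step. The paper's later remark that $\mathbb Z_{p^\infty}$ ``shows the validity of the theorem for infinite groups'' only exhibits one infinite group where the equivalence happens to hold; it does not repair this gap. So your restriction to finite $G$ is not excess caution but a genuine correction of the statement's scope. One small residual point: for the trivial group the equivalence still fails even in the finite case (the empty graph is vacuously complete, but there is no prime-order subgroup), so that degenerate case must be excluded from the statement rather than merely ``disposed of'' as vacuously true.
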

\begin{proof}
Suppose $G$ has two subgroups of prime order, then they are not adjacent in $\mathscr I_c(G)$. It follows that if $\mathscr I_c(G)$ is complete then $G$ has a unique subgroup of prime order. Conversely, suppose $G$ has a unique subgroup of prime order, then every subgroup of $G$ contains that subgroup. It follows that the intersection of any two subgroups of $G$ is non-trivial and so $\mathscr I_c(G)$ is complete. Hence the proof.
\end{proof}

\begin{cor}\label{cyclic intersection graph 2}
Let $G$ be a finite group. Then $\mathscr I_c(G)$ is complete if and only if $G$ is isomorphic to either $\mathbb Z_{p^\alpha}$ or $Q_{2^\alpha}(\alpha \geq 3)$. In this case, $\mathscr I_c(\mathbb Z_{p^\alpha})\cong K_{\alpha-1}$ and $\mathscr I_c(Q_{2^\alpha})\cong K_{2^{\alpha-2}+\alpha-1}$.
\end{cor}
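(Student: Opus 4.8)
The plan is to read the statement off the characterization already established in Theorem~\ref{cyclic intersection graph 15}, so that the whole problem reduces to classifying the finite groups possessing a \emph{unique} subgroup of prime order, and then to count vertices in the two resulting families. First I would observe that such a group $G$ must be a $p$-group: if two distinct primes $p$ and $q$ divided $|G|$, then Cauchy's theorem would furnish a subgroup of order $p$ and one of order $q$, that is, two distinct subgroups of prime order, contradicting uniqueness. Hence $|G|=p^{\alpha}$ and the condition becomes that $G$ has a unique subgroup of order $p$.

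Next I would invoke the classical structure theorem for finite $p$-groups: a finite $p$-group has a unique subgroup of order $p$ if and only if it is either cyclic or (when $p=2$) a generalized quaternion group $Q_{2^{\alpha}}$. This is the step doing the real group-theoretic work, and rather than reprove it I would cite a standard reference. Combined with the reduction above, this yields that $\mathscr I_c(G)$ is complete exactly when $G\cong\mathbb Z_{p^{\alpha}}$ or $G\cong Q_{2^{\alpha}}$ with $\alpha\geq 3$; the converse direction is immediate, since a cyclic $p$-group has a unique subgroup of each order and $Q_{2^{\alpha}}$ has a unique involution, so in both cases Theorem~\ref{cyclic intersection graph 15} applies.

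It remains to count the vertices, that is, the proper nontrivial cyclic subgroups. For $\mathbb Z_{p^{\alpha}}$ these are precisely the subgroups of orders $p,p^{2},\dots,p^{\alpha-1}$, giving $\alpha-1$ vertices, and since all of them contain the unique subgroup of order $p$ the graph is $K_{\alpha-1}$. For $Q_{2^{\alpha}}$ I would split the cyclic subgroups into those lying inside the cyclic maximal subgroup $\langle a\rangle$ of order $2^{\alpha-1}$, which contribute the $\alpha-1$ subgroups of orders $2,\dots,2^{\alpha-1}$, and those generated by an element outside $\langle a\rangle$. The key computation, which I expect to be the main obstacle, is that every element outside $\langle a\rangle$ has order $4$ and that each of the resulting cyclic subgroups of order $4$ contains exactly two such elements, so these account for $2^{\alpha-1}/2=2^{\alpha-2}$ further vertices. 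This gives $2^{\alpha-2}+\alpha-1$ vertices in total, and as each of them contains the unique subgroup of order $2$ every pair intersects nontrivially, whence $\mathscr I_c(Q_{2^{\alpha}})\cong K_{2^{\alpha-2}+\alpha-1}$.
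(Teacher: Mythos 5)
Your proposal is correct and follows essentially the same route as the paper: reduce via Theorem~\ref{cyclic intersection graph 15} to groups with a unique subgroup of prime order, note such a group is a $p$-group, invoke the classification of $p$-groups with a unique subgroup of order $p$ (the paper cites Scott's \emph{Group Theory}, Proposition 1.3), and then count the proper nontrivial cyclic subgroups. The only difference is that you justify the count $2^{\alpha-2}+\alpha-1$ for $Q_{2^\alpha}$ in detail (every element outside $\langle a\rangle$ has order $4$, two per subgroup), whereas the paper simply asserts it.
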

\begin{proof}
By Theorem~\ref{cyclic intersection graph 15}, $G$ must be a $p$-group. Since $G$ has a unique subgroup of prime order, so by \cite[Proposition 1.3]{scott},
 $G\cong \mathbb Z_{p^\alpha}$ or $Q_{2^\alpha}$. If $G\cong \mathbb Z_{p^\alpha}$, then  $G$ has $\alpha-1$ proper cyclic subgroups and they intersect with each other non-trivially. Thus $\mathscr I_c(G)\cong K_{\alpha-1}$. If $G\cong Q_{2^\alpha}$, then it has $2^{\alpha-2}+\alpha-1$ proper cyclic subgroups and every cyclic subgroups of $G$ contains a unique subgroup of order 2 and so  $\mathscr I_c(Q_{2^\alpha})\cong K_{2^{\alpha-2}+\alpha-1}$.
%
\end{proof}
\begin{rem}\label{rem 1}
$\mathbb{Z}_{p^{\infty}}$ is an example of an infinite group such that $\mathscr I_c (\mathbb{Z}_{p^{\infty}})$ is complete. This shows the validity of Theorem~\ref{cyclic intersection graph 15} for infinite groups.
\end{rem}

In \cite{Chakara}, Chakarabarthy \emph{et al} classified all the cyclic groups whose intersection graph of subgroups are planar. In the next result, we classify all non-cyclic finite abelian groups whose intersection graph of cyclic subgroups are planar. We mainly use the Kuratowski's Theorem to check the planarity of a graph.

\begin{thm}\label{cyclic intersection graph 16}
Let $G$ be a finite non-cyclic abelian group. Then $\mathscr I_c(G)$ is planar if and only if $G$ is isomorphic to one of $\mathbb Z_{p}^n$, $\mathbb Z_4\times \mathbb Z_2$, $\mathbb Z_9\times \mathbb Z_3$, $\mathbb Z_{2q}\times \mathbb Z_2$ or $\mathbb Z_4\times \mathbb Z_4$.
\end{thm}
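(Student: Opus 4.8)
The plan is to decide planarity through Kuratowski's theorem, driven by one decisive observation: if a subgroup $P$ of prime order lies inside five or more distinct proper cyclic subgroups of $G$, then any two of these five subgroups intersect in at least $P$, so they span a $K_5$ and $\mathscr I_c(G)$ is non-planar. Since for $H\le G$ the graph $\mathscr I_c(H)$ is an induced subgraph of $\mathscr I_c(G)$, non-planarity is inherited upward, so it suffices to control, for each minimal subgroup $P$, the number of cyclic subgroups lying above it. To count efficiently I would first record the structure coming from the primary decomposition $G=\prod_p G_p$: every cyclic subgroup of $G$ is uniquely a product $\prod_p C_p$ with $C_p\le G_p$ cyclic, two such products meet non-trivially precisely when they meet non-trivially in some component, and hence the number of proper cyclic subgroups of $G$ containing a fixed order-$p$ subgroup $P\le G_p$ is $n_p(P)\cdot\prod_{r\ne p}m_r$, where $n_p(P)$ counts the cyclic subgroups of $G_p$ above $P$ and $m_r$ counts all cyclic subgroups of $G_r$ (the trivial one included).

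First I would bound the number of prime divisors. If $G$ has at least three, pick the non-cyclic component $G_{p_0}$ and any order-$p_0$ subgroup $P\le G_{p_0}$; then $\prod_{r\ne p_0}m_r\ge 4$ already, so the $K_5$ criterion forces $n_{p_0}(P)=1$ for every $P$ (so $G_{p_0}$ is elementary abelian) and exactly two further primes, each a factor $\mathbb Z_r$. But then a minimal subgroup of one cyclic component sees the $m_{p_0}\ge p_0+2\ge 4$ cyclic subgroups of the elementary abelian factor, reinstating a $K_5$; hence $G$ has at most two prime divisors. For exactly two primes I would rule out both components being non-cyclic (each forces the relevant $m\ge 4$, and the pair of inequalities $n_p(P)m_q\le 4$, $n_q(Q)m_p\le 4$ degenerates to requiring both components to be $\mathbb Z_2^2$, impossible for distinct primes), leaving one non-cyclic component $G_p$ and one cyclic factor $\mathbb Z_{q^b}$. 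The order-$q$ subgroup of $\mathbb Z_{q^b}$ then yields $b\cdot m_p\le 4$, and as a non-cyclic $p$-group has $m_p\ge p+2\ge 4$ this forces $b=1$ and $m_p=4$, i.e. $G_p\cong\mathbb Z_2^2$; this is exactly $G\cong\mathbb Z_2^2\times\mathbb Z_q\cong\mathbb Z_{2q}\times\mathbb Z_2$.

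The remaining and most delicate case is $G$ a non-cyclic abelian $p$-group, where the $K_5$ criterion is necessary but its sufficiency must be established by identifying the graph outright. The structural key is that a cyclic $p$-group has a unique minimal subgroup, so two cyclic subgroups of $G$ are adjacent iff they share the same socle; consequently $\mathscr I_c(G)$ is a disjoint union of complete graphs, one block $K_{n_p(P)}$ for each order-$p$ subgroup $P$. Planarity here is therefore \emph{exactly} the condition $n_p(P)\le 4$ for all $P$. Elementary abelian groups $\mathbb Z_p^n$ give $n_p(P)=1$, hence the totally disconnected (so planar) graph predicted by Theorem~\ref{cyclic intersection graph 14}; meanwhile any group of exponent $\ge p^3$, or of rank $\ge 3$ carrying an element of order $p^2$, produces a block of size $\ge 5$ and is discarded. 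The heart of the matter is the count of $n_p(P)$ for the rank-two exponent-$p^2$ groups $\mathbb Z_{p^2}\times\mathbb Z_p$ and $\mathbb Z_{p^2}\times\mathbb Z_{p^2}$: in each the relevant block has size $p+1$, coming from the $p$ cyclic subgroups of order $p^2$ sharing a given socle.

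This last computation is where I expect the main obstacle, since it is precisely what pins down the admissible list, and the sharp boundary $p+1\le 4$ must be handled with care: it admits $p\in\{2,3\}$. Tracing it through, the surviving groups and their graphs are $\mathbb Z_4\times\mathbb Z_2$ (giving $K_3\cup 2K_1$), $\mathbb Z_9\times\mathbb Z_3$ (giving $K_4\cup 3K_1$), $\mathbb Z_4\times\mathbb Z_4$ (three disjoint triangles), $\mathbb Z_9\times\mathbb Z_9$ (four disjoint copies of $K_4$), and $\mathbb Z_{2q}\times\mathbb Z_2$ (a $K_4$ with three pendant vertices), all of which one confirms to be planar by exhibiting them directly. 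I would present these explicit small graphs as the closing verification, taking particular care over the two homocyclic cases $\mathbb Z_{p^2}\times\mathbb Z_{p^2}$, which are the subtlest to classify correctly.
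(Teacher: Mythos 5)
Your argument is essentially correct, and it takes a genuinely different, more systematic route than the paper: instead of a case-by-case enumeration, you observe that in an abelian $p$-group adjacency of proper cyclic subgroups is the equivalence relation ``same socle,'' so that $\mathscr I_c$ of a $p$-group is a disjoint union of cliques, and you handle composite orders through the primary decomposition and the count $n_p(P)\cdot\prod_{r\neq p}m_r$. All of these steps are sound. But precisely because they are sound, your proof does not establish the statement as printed --- it refutes it. Your computation that $\mathscr I_c(\mathbb Z_{p^2}\times\mathbb Z_{p^2})\cong (p+1)K_{p+1}$ is correct: there are $p(p+1)$ cyclic subgroups of order $p^2$, and they are distributed $p$ to each of the $p+1$ order-$p$ subgroups (for instance because $GL_2(\mathbb Z_{p^2})\rightarrow GL_2(\mathbb Z_p)$ is surjective, so the automorphism group permutes the socles transitively). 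For $p=3$ this gives $\mathscr I_c(\mathbb Z_9\times\mathbb Z_9)\cong 4K_4$, which is planar, so $\mathbb Z_9\times\mathbb Z_9$ must appear in the classification; the theorem omits it.

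The source of the discrepancy is an error in the paper's own proof (Case 4, $p=3$): it claims that $\langle a^3\rangle$ is contained in each of $\langle a\rangle$, $\langle ab^3\rangle$, $\langle ab^6\rangle$, $\langle a^3b\rangle$, and concludes that these five subgroups form a $K_5$. But $(a^3b)^3=a^9b^3=b^3$, so the unique minimal subgroup of $\langle a^3b\rangle$ is $\langle b^3\rangle$, not $\langle a^3\rangle$; the cyclic subgroups of order $9$ containing $a^3$ are exactly $\langle a\rangle$, $\langle ab^3\rangle$, $\langle ab^6\rangle$, and the clique through $\langle a^3\rangle$ is a $K_4$. (The paper's $p=2$ computation in the same case is also off: it omits the subgroup $\langle(1,3)\rangle$, and the correct graph is $3K_3$ rather than $2K_3\cup K_2$, though this does not affect planarity.) So the corrected theorem should read: $\mathscr I_c(G)$ is planar if and only if $G$ is one of $\mathbb Z_p^n$, $\mathbb Z_4\times\mathbb Z_2$, $\mathbb Z_9\times\mathbb Z_3$, $\mathbb Z_4\times\mathbb Z_4$, $\mathbb Z_9\times\mathbb Z_9$, $\mathbb Z_{2q}\times\mathbb Z_2$, and your proposal, written out in full, proves exactly this. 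The one point you should make fully explicit in a write-up is the even-distribution count ($p$ cyclic subgroups of order $p^2$ over each socle in the homocyclic case), since that single count is precisely what the published proof got wrong.
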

\begin{proof}
We divide the proof into several cases.

\noindent \textbf{Case 1:} If $G\cong  \mathbb Z_{p}^n, n \geq 1$, then every element of $G$ is prime and so by Theorem~\ref{cyclic intersection graph 1},  $\Gamma_{Ic}(G)$ is totally disconnected.

\noindent \textbf{Case 2:} If $G\cong \mathbb Z_{p^2}\times \mathbb Z_p$.  Here $\langle(1,0)\rangle$, $\langle(1,1)\rangle$, $\ldots$, $\langle(1,p-1)\rangle$,
$\langle(p,0)\rangle$, $\langle(p,1)\rangle$, $\ldots$, $\langle(p,p-1)\rangle$, $\langle(0,1)\rangle$ are the only
proper cyclic subgroups of $G$. Note that $\langle(p,0)\rangle$ is a subgroup of $\langle(1,0)\rangle$, $\langle(1,1)\rangle$, $\ldots$, $\langle(1,p-1)\rangle$
; also no two remaining subgroups intersect non-trivially. Therefore, $\mathscr I_c(G)\cong K_{p+1}\cup \overline{K}_p$. Thus $\mathscr I_c(G)$ is planar if and only if $p=2$, 3.

\noindent \textbf{Case 3:} If $G\cong \mathbb Z_{pq}\times \mathbb Z_p$. Here $G$ has $p+1$ subgroups of order $p$, let them be $H_i$, $i=1$, 2, $\ldots$, $p+1$; unique subgroup of order $q$, say $H$; for each $i=1$, 2, $\ldots$, $p+1$, $HH_i$ is a cyclic subgroups of $G$ of order $pq$. So $H$ is a subgroup of $HH_i$, $i=1$, 2, $\ldots$, $p+1$; for each $i=1$, 2, $\ldots$, $p+1$, $H_i$ is a subgroup of $HH_i$; no two remaining cyclic subgroups intersect non-trivially. So $\mathscr I_c(G)$ is planar if and only if $p=2$.

\noindent \textbf{Case 4:} If $G\cong \mathbb Z_{p^2}\times \mathbb Z_{p^2}$. If $p\geq 5$, then $\mathbb Z_{p^2}\times \mathbb Z_p$ is a subgroup of $G$ and so by case 2, $\mathscr I_c(G)$ is non-planar. If $p=2$, then $\langle (1,0)\rangle$, $\langle (1,2)\rangle$, $\langle (2,0)\rangle$, $\langle (0,1)\rangle$, $\langle (2,1)\rangle$, $\langle (0,2)\rangle$, $\langle (1,1)\rangle$, $\langle (2,2)\rangle$ are the only proper cyclic subgroups of $G$. Also $\langle (2,0)\rangle$ is a proper subgroup of $\langle (1,0)\rangle$, $\langle (1,2)\rangle$; $\langle (0,2)\rangle$ is a subgroup of $\langle (0,1)\rangle$, $\langle (2,1)\rangle$; $\langle (2,2)\rangle$ is a subgroup of $\langle (1,1)\rangle$; no two remaining subgroups intersect. Therefore, $\mathscr I_c(G)\cong 2K_3\cup K_2$, which is planar. If $p=3$, then $G\cong \langle a, b~|~a^9=b^9=1, ab=ba\rangle$. Here $\langle a^3\rangle$ is a subgroup of $\langle a\rangle$, $\langle ab^3\rangle$, $\langle ab^6\rangle$, $\langle a^3b\rangle$. Therefore, these five subgroups adjacent with each other and so they form $K_5$ as a subgraph of $\mathscr I_c(G)$. It follows that $\mathscr I_c(G)$ is non-planar.

\noindent \textbf{Case 5:} If $G\cong \mathbb Z_{p^2q}\times \mathbb Z_p$, then $G$ has the unique subgroup of order $q$, let it be $H$; $G$ has at least three subgroups of order $p^2q$ and at least three subgroups of order $pq$. Therefore, $G$ has at least four cyclic subgroups containing $H$ as a subgroup. Hence $\mathscr I_c(G)$ contains $K_5$ as a subgraph and so $\mathscr I_c(G)$ is non-planar.

\noindent \textbf{Case 6:} If $G\cong \mathbb Z_{p^3}\times \mathbb Z_p$, then $G$ has at least four cyclic subgroups of order $p^3$, $p^3$, $p^2$, $p^2$ respectively and they have a unique subgroup of order $p$ in common. Therefore, these five subgroups are adjacent with each other and they form $K_5$ as a subgraph of $\mathscr I_c(G)$, so $\mathscr I_c(G)$ is non-planar.

\noindent \textbf{Case 7:} If $G\cong \mathbb Z_{p^3}\times \mathbb Z_{p^2}$, then $G$ has a subgroup isomorphic to $\mathbb Z_{p^3}\times \mathbb Z_p$. So by Case 6, $\mathscr I_c(G)$ contains $K_5$ as a subgraph and hence  $\mathscr I_c(G)$ is non-planar.

\noindent \textbf{Case 8:} If $G \cong {\mathbb Z}_{p_1^{\alpha_1}} \times {\mathbb Z}_{p_2^{\alpha_2}} \times \ldots \times {\mathbb Z_{p_k^{\alpha_k}}}$, where
$p_i$'s are primes with at least two $p_i$'s are equal and $\alpha_i\geq1$. If $k\geq 2$, then $G$ has one of the following groups as its subgroup: $\mathbb Z_{p^3}\times \mathbb Z_p$, $\mathbb Z_{p^2q}\times \mathbb Z_p$ or $\mathbb Z_{p^3}\times \mathbb Z_{p^2}$ and so by Cases 5, 6, 7, $\mathscr I_c(G)$ contains $K_5$ as a subgraph. Therefore, $\mathscr I_c(G)$ is non-planar.

Combining all the cases together the proof follows.
\end{proof}

$\mathbb{Z}_p^{\infty}$, $Q_8$ and Tarski Monster groups shows the existence of an infinite abelian group, finite non-abelian group and infinite non-abelian group respectively, whose intersection graph of cyclic subgroups is planar. Now we pose the following:
\begin{problem}
Classify the non-abelian groups and infinite abelian groups whose intersection graph of cyclic subgroups are planar.
\end{problem}

\begin{thm}\label{cyclic intersection graph 3}
Let $G$ be a finite group. Then $\mathscr I_c(G)$ is a star if and only if $G\cong \mathbb Z_{p^3}$.
\end{thm}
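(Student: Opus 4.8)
The plan is to prove the two implications separately; the reverse direction is a one-liner and the forward direction is a short case analysis exploiting that a star is connected and triangle-free. For sufficiency, if $G\cong\mathbb Z_{p^3}$ then Corollary~\ref{cyclic intersection graph 2} gives $\mathscr I_c(G)\cong K_2=K_{1,1}$, a star.

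For necessity, suppose $\mathscr I_c(G)=K_{1,n}$ with $n\ge1$. First I would use triangle-freeness to cut down the possible orders of elements. If some $g$ has order $p^2q$, $pqr$, or $p^k$ with $k\ge4$, then three proper subgroups of $\langle g\rangle$ (which are proper in $G$) pairwise intersect non-trivially---e.g.\ those of orders $p,p^2,pq$ inside $\mathbb Z_{p^2q}$, those of orders $pq,pr,qr$ inside $\mathbb Z_{pqr}$, or a chain of three inside $\mathbb Z_{p^k}$---so $\mathscr I_c(G)$ would contain $K_3$, impossible. If $g$ has order $p^3$ and $\langle g\rangle\ne G$, then $\langle g^{p^2}\rangle\subset\langle g^p\rangle\subset\langle g\rangle$ is a triangle; hence an element of order $p^3$ forces $G=\langle g\rangle\cong\mathbb Z_{p^3}$. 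I may therefore assume every element of $G$ has order $1,p,p^2$ or $pq$, and aim to show this makes a star impossible.

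I would then split on $n$. If $n=1$, the two vertices $A,B$ have non-trivial intersection, which is again a proper cyclic subgroup and so equals $A$ or $B$; say $A\subset B$. Since $B$ is cyclic with unique proper non-trivial subgroup $A$, we get $B\cong\mathbb Z_{p^2}$, and a quick check shows $G$ is a $p$-group with a unique subgroup of order $p$; the classification used in Corollary~\ref{cyclic intersection graph 2} ($\mathbb Z_{p^a}$ or $Q_{2^a}$) together with the requirement of exactly two vertices forces $\mathbb Z_{p^3}$---contradicting the no-$p^3$ assumption. If $n\ge2$, let $C$ be the centre. The crucial point is that a cyclic group of prime-power order has a unique minimal subgroup $M$; were $|C|$ a prime power, every leaf (meeting $C$ non-trivially) would contain $M$, hence the leaves would be pairwise adjacent, contradicting independence. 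So $|C|$ has two distinct prime divisors, and by the order restriction $|C|=pq$. Writing $P,Q$ for the minimal subgroups of $C$, each is contained in at most one leaf, which collapses the vertex set to $\{C,P,Q\}$; then any $g\in G\setminus C$ satisfies $\langle g\rangle=G$ (it cannot be $C,P$ or $Q$, all of which lie in $C$), so $G$ is cyclic, and a cyclic group properly containing $\mathbb Z_{pq}$ has a fourth proper cyclic subgroup---a contradiction. Hence both subcases fail and $G\cong\mathbb Z_{p^3}$.

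I expect the main obstacle to be the case $n\ge2$: the delicate steps are identifying $|C|=pq$ via the unique-minimal-subgroup argument and then showing the graph collapses to three vertices and that $G$ is forced to be cyclic, which is what produces the final contradiction.
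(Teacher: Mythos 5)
Your proof is correct, and while it shares the paper's two key ingredients---triangle-freeness of a star, and the classification of finite $p$-groups with a unique subgroup of order $p$ as $\mathbb Z_{p^\alpha}$ or $Q_{2^\alpha}$ (the fact behind Corollary~\ref{cyclic intersection graph 2})---it is organized genuinely differently. The paper cases on the direction of containment between the center $H$ and the leaves: if $H$ lies in every leaf, two leaves would share $H$ and be adjacent, forcing $n=1$; if every leaf lies in $H$, it argues each leaf must have order $p$ (ruling out a leaf of order $q$, i.e.\ a center of order $pq$, by a rather terse claim that $|G|=p^\alpha q^\beta$ or $pqk$ is impossible), whence $|H|=p^2$, $G$ is a $p$-group with a unique subgroup of order $p$, and the Scott classification finishes after discarding $Q_{2^\alpha}$. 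You instead prove a global order-restriction step first (no elements of order $p^2q$, $pqr$, or $p^k$ with $k\geq 3$ unless $G\cong\mathbb Z_{p^3}$ itself), then split on the number of leaves: your $n=1$ branch ends with the same Scott-type argument as the paper, but your $n\geq 2$ branch replaces the paper's $pq$-analysis with the unique-minimal-subgroup observation (a prime-power center would make all leaves contain its minimal subgroup, hence pairwise adjacent), which forces $|C|=pq$, collapses the graph to $P-C-Q$, and shows $G$ would be cyclic of order properly divisible by $pq$, an immediate contradiction. Your handling of that $pq$ configuration is tighter than the paper's, and your order-restriction step is essentially what the paper later isolates as Theorem~\ref{cyclic intersection graph 7}; the paper's containment trichotomy, by contrast, reaches the $p$-group endgame with less setup.
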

\begin{proof} If $G\cong \mathbb Z_{p^3}$, then $\mathscr I_c(G) \cong K_2$, which is a star.

Conversely let $\mathscr I_c(G)$ be a star. Let $H$ be the center vertex and $H_1$, $H_2$, $\ldots$, $H_n$ be the pendent vertices of $\mathscr I_c(G)$. Now
we consider the following cases:

\noindent\textbf{Case 1:} Let $H$ be a subgroup of $H_i$, for all $i=1$, $\ldots$, $n$. Then $H$, $H_1$, $H_2$ forms $C_3$ as a
subgraph of $\mathscr I_c(G)$, which is not possible. So the only possibility is $n=1$ and $H$ is a subgroup of $H_1$.

\noindent\textbf{Case 2:} Let $H_i$ be a subgroup of $H$, for all $i=1$, 2, $\ldots$, $n$. Then we show that $|H_i|=p$, for all $i$.  For otherwise, $|H_i|=q$ or $m$ for some $i$, where $m$ is a composite number. If $|H_i|=m$, then $H$, $H_i$ and a proper subgroup of $H_i$ forms $C_3$ as a subgraph of $\mathscr I_c(G)$, which is a contradiction to our assumption.
If $|H_i|=q$, then $|H|=pq$ and so $|G|=p^{\alpha}q^{\beta}$, $\alpha + \beta \geq 3$ or $pqk$, $k >1$. But $|G|=p^{\alpha}q^{\beta}$ is not possible, since it has a subgroup of order $p^2$. $|G|=pqk$ is also not possible, since it has a subgroup of order other than $p$ and $q$.

Thus we have $|H_i|=p$, for all $i$ and so $|H|=p^2$. It follows that $|G|=p^\alpha$. Since $G$ has a unique subgroup of order $p$, so by \cite[Proposition 1.3]{scott}, $G\cong \mathbb Z_{p^\alpha}$ or $Q_{2^\alpha}$. But $G$ can not be isomorphic to $Q_{2^\alpha}$, since $Q_{2^\alpha}$ contains more than one subgroup of order 4. So by Corollary~\ref{cyclic intersection graph 2}, we must have $G\cong \mathbb Z_{p^3}$. This completes the proof.
\end{proof}

\begin{thm}\label{cyclic intersection graph 4}
Let $G$ be a finite group. Then $\mathscr I_c(G)\cong P_n$ if and only if $n=1$ and $G\cong \mathbb Z_{p^3}$.
\end{thm}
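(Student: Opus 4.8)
The plan is to split on the number of edges $n$, using the complete‑graph and star classifications already established to dispose of the small cases and a direct triangle‑hunting argument for the rest. For the easy direction, if $G\cong\mathbb Z_{p^3}$ then its only proper cyclic subgroups are those of orders $p$ and $p^2$, the former contained in the latter; hence they are adjacent and $\mathscr I_c(G)\cong K_2=P_1$, exactly as recorded in the proof of Theorem~\ref{cyclic intersection graph 3}.

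For the converse, suppose $\mathscr I_c(G)\cong P_n$. First I would handle $n=1$: here $P_1=K_2$ is complete, so Corollary~\ref{cyclic intersection graph 2} applies and forces $G\cong\mathbb Z_{p^\alpha}$ with $\alpha-1=2$, or $G\cong Q_{2^\alpha}$ with $2^{\alpha-2}+\alpha-1=2$; the latter equation has no solution (its value is already $4$ at $\alpha=3$), so $G\cong\mathbb Z_{p^3}$. Next, $n=2$: since $P_2=K_{1,2}$ is a star, Theorem~\ref{cyclic intersection graph 3} would force $G\cong\mathbb Z_{p^3}$, whose graph is $K_2=K_{1,1}$ and not $K_{1,2}$; hence $n=2$ cannot occur.

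The substantive case is $n\geq 3$, where the plan is to exploit acyclicity of $P_n$ together with the unique‑subgroup structure of cyclic groups. Label the path $v_0v_1\cdots v_n$, set $A=v_0$, $B=v_1$, $C=v_2$, so $A\cap B\neq 1$ and $B\cap C\neq 1$, while $A\cap C=1$ since $A,C$ are non‑adjacent. As $A\cap B$ and $B\cap C$ are nontrivial subgroups of the cyclic group $B$, each contains a subgroup of prime order that is unique of its order in $B$; these primes must be distinct, for a common prime subgroup of $B$ would lie in both $A$ and $C$, contradicting $A\cap C=1$. Writing $B_p\subseteq A\cap B$ and $B_q\subseteq B\cap C$ with $p\neq q$, the cyclic group $B$ then has a unique subgroup $B_{pq}$ of order $pq$. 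I would first rule out $|B|>pq$: then $B_{pq}$ is a proper cyclic vertex, distinct from $A$ and $B$ and adjacent to both, giving a forbidden triangle $\{A,B,B_{pq}\}$. Hence $|B|=pq$, and a short check on the admissible orders of $A\cap B$ and $B\cap C$ forces $A=B_p$ and $C=B_q$, both of prime order. Finally, since $n\geq 3$, the vertex $C=v_2$ has a further neighbour $D=v_3$; because $C$ has prime order, $C\cap D\neq 1$ gives $C\subseteq D$, so $C\subseteq B\cap D$ and $B$ is adjacent to $D$, producing the triangle $\{B,C,D\}$ — again impossible in a path. This contradiction eliminates $n\geq 3$ and completes the proof.

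The main obstacle is the $n\geq 3$ analysis: the argument must repeatedly convert ``nontrivial intersection'' into an explicit common prime‑order subgroup and then locate a third vertex adjacent to both ends of an existing edge. The delicate points are (i) verifying that the two relevant primes are genuinely distinct, and (ii) checking at each step that the subgroup produced (be it $B_{pq}$, $B_p$, or $C$ itself) is a bona fide proper cyclic vertex \emph{distinct} from the two it must be joined to, so that an actual $C_3$ — forbidden since $P_n$ is acyclic — really appears.
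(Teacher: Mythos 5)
Your proposal is correct, and while it shares the paper's skeleton --- the same forward direction, the use of Corollary~\ref{cyclic intersection graph 2} to settle $P_1=K_2$ and of Theorem~\ref{cyclic intersection graph 3} to exclude $P_2=K_{1,2}$ (the paper phrases these as the cases $r=2$ and $r=3$ vertices), and a triangle hunt powered by acyclicity --- your handling of the long-path case is genuinely different in execution. The paper first proves a uniform structural lemma: two adjacent vertices of the path must be nested, since otherwise their intersection is itself a nontrivial proper cyclic subgroup, hence a third vertex adjacent to both, closing a $C_3$; it then argues purely on containment patterns, ruling out chains $H_{i-1}\subseteq H_i\subseteq H_{i+1}$ and showing that for four vertices the alternating pattern yields $H_2\cap H_4\supseteq H_3\neq 1$, a triangle. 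You instead extract concrete group-theoretic data from the second vertex: distinct primes $p\neq q$ with $B_p\subseteq A\cap B$ and $B_q\subseteq B\cap C$, then $|B|=pq$ (else $B_{pq}$ is a third vertex), then $A=B_p$ and $C=B_q$, and finally $v_3$ closes the triangle $\{B,C,D\}$ because $C$ has prime order. Note that your final contradiction is literally the paper's one ($B\cap D\supseteq C$ is $H_2\cap H_4\supseteq H_3$), reached by a longer but more explicit road. The paper's comparability lemma is slicker and is effectively reused for the cycle theorem and the girth corollary; your version costs several distinctness checks which you assert rather than carry out, though each is a one-liner (e.g.\ $B_{pq}\neq A$ because $B_q\subseteq B_{pq}$ and $B_q\subseteq C$ would give $A\cap C\neq 1$; and $A=B_p$ because otherwise $B_p$ would be a third vertex adjacent to both $A$ and $B$). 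A small bonus of your route: it sidesteps a slip in the paper's case enumeration, which lists only three of the four containment patterns for a consecutive triple and omits precisely the pattern $H_{i-1}\supseteq H_i\subseteq H_{i+1}$ that its $r\geq 4$ step then invokes.
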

\begin{proof}If $G\cong \mathbb Z_{p^3}$, then $\mathscr I_c(G) \cong K_2$, which is a path.

Conversely, let $\mathscr I_c(G)$ be a path $H_1-H_2-\ldots -H_r$. Let $r\geq 3$. Firstly note that for distinct $i$, $j$, $k$, $H_i\cap H_j=H_k$ is not possible, since these subgroups forms $C_3$ as a subgraph of $\mathscr I_c(G)$. So for each $i=2$, 3, $\ldots$, $r-1$, the subgroups $H_{i-1}$, $H_i$, $H_{i+1}$ must satisfy one of the following: (i) $H_{i-1}\subseteq H_i\subseteq H_{i+1}$, (ii) $H_{i-1}\supseteq H_i\supseteq H_{i+1}$, (iii) $H_{i-1}\subseteq H_i\supseteq H_{i+1}$. But first two cases are not possible, since these three subgroups forms $C_3$ as a subgraph of $\mathscr I_c(G)$. So the only possibility is the case (iii). If $r\geq 4$, then $H_1\subseteq H_2\supseteq H_3\subseteq H_4$, so that $H_2$, $H_3$, $H_4$ forms $C_3$ as a subgraph of $\mathscr I_c(G)$, which is a contradiction. Thus $r\leq 3$. If $r=3$, then $\mathscr I_c(G)\cong K_{1,2}$ is a star, which is not possible by Theorem~\ref{cyclic intersection graph 3}. So we must have $r=2$ and it follows that $G\cong \mathbb Z_{p^3}$.
\end{proof}

\begin{thm}\label{cyclic intersection graph 5}
Let $G$ be a finite group. Then $\mathscr I_c(G)\cong C_n$ if and only if $n=3$ and $G\cong \mathbb Z_{p^4}$.
\end{thm}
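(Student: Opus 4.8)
The plan is to handle the two directions separately and, for the converse, to reduce the case $n=3$ to the already-established classification of complete $\mathscr I_c(G)$ while ruling out $n\ge 4$ by a structural argument. For the forward direction, if $G\cong\mathbb Z_{p^4}$ its only nontrivial proper cyclic subgroups are the unique subgroups of orders $p,p^2,p^3$, each of which contains the unique subgroup of order $p$; hence any two intersect nontrivially and $\mathscr I_c(G)\cong K_3=C_3$. (Equivalently, this is immediate from Corollary~\ref{cyclic intersection graph 2}, since $\mathscr I_c(\mathbb Z_{p^\alpha})\cong K_{\alpha-1}$ gives $K_3$ exactly when $\alpha=4$.) For the converse, suppose $\mathscr I_c(G)\cong C_n$. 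When $n=3$ the graph $C_3=K_3$ is complete, so Corollary~\ref{cyclic intersection graph 2} forces either $G\cong\mathbb Z_{p^\alpha}$ with $\alpha-1=3$, i.e.\ $\alpha=4$, or $G\cong Q_{2^\alpha}$ with $2^{\alpha-2}+\alpha-1=3$; the latter equation has no solution with $\alpha\ge 3$, so $G\cong\mathbb Z_{p^4}$. It therefore remains to rule out $n\ge 4$, which is the core of the argument.

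The key structural step I would establish first is that, since $C_n$ with $n\ge 4$ is triangle-free, adjacency must force containment. Indeed, if two vertices $A,B$ are adjacent, then $A\cap B$ is nontrivial by definition, cyclic (being a subgroup of the cyclic group $A$), and proper (being contained in the proper subgroup $A$); so if $A\cap B$ differed from both $A$ and $B$, it would itself be a vertex adjacent to each of $A$ and $B$, creating the triangle $\{A,B,A\cap B\}$. As $C_n$ ($n\ge4$) contains no triangle, we must have $A\subseteq B$ or $B\subseteq A$. Thus every edge of the cycle is a strict containment of cyclic subgroups.

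Next I would analyze three consecutive vertices $H_{i-1}-H_i-H_{i+1}$ and classify $H_i$ by the two incident (strict) containments into a \emph{chain} ($H_{i-1}\subsetneq H_i\subsetneq H_{i+1}$ or its reverse), a \emph{local minimum} ($H_i\subsetneq H_{i-1}$ and $H_i\subsetneq H_{i+1}$), or a \emph{local maximum} ($H_{i-1}\subsetneq H_i$ and $H_{i+1}\subsetneq H_i$). In the chain case $H_{i-1}\subsetneq H_{i+1}$, while in the local-minimum case $\{e\}\neq H_i\subseteq H_{i-1}\cap H_{i+1}$; in both situations $H_{i-1}\cap H_{i+1}\neq\{e\}$, so $H_{i-1}\sim H_{i+1}$, contradicting the fact that vertices at distance $2$ in $C_n$ ($n\ge 4$) are non-adjacent. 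Hence every vertex is forced to be a local maximum. But applying this to a single edge $H_1-H_2$ yields simultaneously $H_2\subsetneq H_1$ (as $H_1$ is a local maximum) and $H_1\subsetneq H_2$ (as $H_2$ is a local maximum), a contradiction. This eliminates all $n\ge 4$ and, combined with the $n=3$ case, completes the proof.

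The main obstacle I anticipate is the structural lemma that adjacency implies containment in the triangle-free setting: once that is secured, the local maximum/minimum/chain trichotomy and the distance-$2$ non-adjacency of $C_n$ close the case almost mechanically. I would take care to verify that $A\cap B$ really qualifies as a vertex (nontrivial, proper, and cyclic) so that the triangle it would form is legitimate, since the whole argument hinges on that point.
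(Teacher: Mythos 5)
Your proof is correct and takes essentially the same route as the paper: both convert adjacency into strict containment via the no-triangle observation (the intersection of adjacent vertices would otherwise be a third vertex forming a $C_3$), both rule out $n\ge 4$ by analyzing the up/down pattern of containments along consecutive vertices, and both settle $n=3$ by invoking the classification of complete $\mathscr I_c(G)$ in Corollary~\ref{cyclic intersection graph 2}. If anything, your write-up is tighter: the paper (via the proof of Theorem~\ref{cyclic intersection graph 4}, which it cites for the $n\ge 4$ case) enumerates only three of the four containment patterns for a consecutive triple, omitting the local-minimum configuration that your trichotomy handles explicitly, and your closing contradiction (two adjacent vertices cannot both be local maxima) is cleaner than the paper's four-vertex chain argument.
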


\begin{proof}If $G\cong \mathbb Z_{p^4}$, then $\mathscr I_c(G) \cong C_3$.

Conversely, Suppose that $\mathscr I_c(G)$ is a cycle $H_1-H_2-\cdots -H_r-H_1$. Let $2<r\geq 4$. Then by the same argument as in the proof of Theorem~\ref{cyclic intersection graph 4}, we can show that $r\leq 3$.
If $r=3$, then by Theorem~\ref{cyclic intersection graph 2}, $\mathscr I_c(G)\cong K_3$ if and only if $G\cong \mathbb Z_{p^4}$.
\end{proof}

\begin{cor}\label{cyclic intersection c1}
Let $G$ be a finite group. Then $girth(\mathscr I_c(G)) \in \{3  , \infty \}$.	
\end{cor}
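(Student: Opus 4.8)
The plan is to reduce the statement to a single dichotomy: that $\mathscr I_c(G)$ cannot contain a cycle without also containing a triangle. Equivalently, I would prove that if $\mathscr I_c(G)$ is $C_3$-free then it is acyclic. Granting this, the girth equals $3$ whenever any cycle is present and $\infty$ otherwise, which is exactly the assertion $girth(\mathscr I_c(G)) \in \{3, \infty\}$.

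The first step is to isolate the structural fact already exploited in the proof of Theorem~\ref{cyclic intersection graph 4}. Whenever two distinct vertices $A$ and $B$ are adjacent, the subgroup $K := A \cap B$ is a nontrivial cyclic subgroup (cyclic as a subgroup of the cyclic group $A$, proper since $A$ is proper, nontrivial by adjacency), hence itself a vertex of $\mathscr I_c(G)$. If $K \neq A$ and $K \neq B$, then $K$ is adjacent to both $A$ and $B$, since $K \cap A = K \cap B = K \neq 1$, so $\{A, B, K\}$ induces a $C_3$. Therefore, in a $C_3$-free $\mathscr I_c(G)$ every edge must be a \emph{containment edge}: for adjacent $A, B$ one has $A \subsetneq B$ or $B \subsetneq A$.

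Next I would argue by contradiction, assuming a $C_3$-free $\mathscr I_c(G)$ still contains a cycle, and pass to a shortest cycle $H_1 - H_2 - \cdots - H_r - H_1$. Being $C_3$-free forces $r \geq 4$, and a shortest cycle is chordless, so $H_{i-1} \cap H_{i+1} = 1$ for every $i$ (indices mod $r$). Orienting each edge toward the smaller of its two endpoints, I would examine the three consecutive vertices $H_{i-1}, H_i, H_{i+1}$, exactly as in the proof of Theorem~\ref{cyclic intersection graph 4}. The two monotone patterns $H_{i-1} \subsetneq H_i \subsetneq H_{i+1}$ and $H_{i-1} \supsetneq H_i \supsetneq H_{i+1}$, together with the ``valley'' pattern $H_{i-1} \supsetneq H_i \subsetneq H_{i+1}$, each force $H_{i-1}$ and $H_{i+1}$ to be comparable, hence adjacent, producing a chord and contradicting chordlessness. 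Thus only the ``peak'' pattern $H_{i-1} \subsetneq H_i \supsetneq H_{i+1}$ survives, so every vertex of the cycle must be a local maximum. But adjacent vertices cannot both be local maxima, since the edge between them points toward only one of them; hence a cyclic arrangement in which every vertex is a peak is impossible. This contradiction shows the cycle cannot exist, so $C_3$-freeness yields acyclicity and the proof is complete.

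The routine parts are verifying that $K$ is a genuine vertex and invoking the chord-splitting fact that a shortest cycle is induced. The main obstacle is the case analysis on $H_{i-1}, H_i, H_{i+1}$: one must be careful that two nontrivial subgroups of a common cyclic group need not meet nontrivially (e.g.\ the order-$2$ and order-$3$ subgroups of $\mathbb{Z}_6$), so the peak pattern is locally admissible and cannot be excluded triple by triple. It is only the global orientation argument — that a cycle cannot have every vertex a local maximum — that delivers the contradiction.
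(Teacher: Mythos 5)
Your proof is correct and takes essentially the same approach as the paper: the paper's own argument (via the proofs of Theorems~\ref{cyclic intersection graph 4} and~\ref{cyclic intersection graph 5}) rests on exactly your two ingredients, namely that $A\cap B$ is itself a vertex of $\mathscr I_c(G)$ forming a triangle with $A$ and $B$ unless it equals one of them, and the case analysis on consecutive triples in which only the ``peak'' pattern survives. If anything, your version is tighter: the cited theorems treat the case where $\mathscr I_c(G)$ \emph{is} a path or cycle (so the absence of chords is automatic), whereas your shortest-cycle, chordless reduction correctly handles a cycle merely contained in $\mathscr I_c(G)$, which is what the corollary actually requires.
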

\begin{proof}
In the proof of Theorem~\ref{cyclic intersection graph 5}, we observed that if $\mathscr I_c(G)$ contains a cycle of length at least four, then it must contains $C_3$ as a subgraph, so its girth is 3. In the remaining cases, $\mathscr I_c(G)$ is acyclic and so its girth is infinity. Hence the proof.
\end{proof}

\begin{thm}\label{cyclic intersection graph 7}
Let $G$ be a finite group. Then the following are equivalent:
\begin{enumerate}[{\normalfont (1)}]
\item $G$ has a maximal cyclic subgroup of order one of $p$, $p^2$ or $pq$ and no two cyclic subgroups of order $p^2$ or $pq$ have a non-trivial intersection;
\item $\mathscr I_c(G)$ is acyclic;
\item $\mathscr I_c(G)$ is bipartite;
\item $\mathscr I_c(G)$ is $C_3$-free.
\end{enumerate}
\end{thm}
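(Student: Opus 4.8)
The plan is to establish the four-way equivalence by first handling the purely graph-theoretic equivalences $(2)\Leftrightarrow(3)\Leftrightarrow(4)$ with soft arguments, and then tying the algebraic condition $(1)$ to them through the chain $(1)\Rightarrow(2)$ and $(4)\Rightarrow(1)$. For the soft part, a forest is bipartite and a bipartite graph has no odd cycle, so $(2)\Rightarrow(3)\Rightarrow(4)$ are immediate; for $(4)\Rightarrow(2)$ I would invoke Corollary~\ref{cyclic intersection c1}, since $girth(\mathscr I_c(G))\in\{3,\infty\}$ forces a $C_3$-free graph to have infinite girth, i.e.\ to be acyclic. This closes the loop among $(2)$, $(3)$, $(4)$ without touching the group structure.

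It remains to connect $(1)$. For $(1)\Rightarrow(2)$ I would describe $\mathscr I_c(G)$ explicitly, reading ``maximal cyclic subgroup'' as a maximal element among the proper cyclic subgroups of $G$, i.e.\ among the vertices. Under $(1)$ each maximal vertex $M$ has order $p$, $p^2$ or $pq$, contributing on its own subgroup set respectively an isolated vertex, the edge on $\{M,M_p\}$ where $M_p\subset M$, or the path $M_p\subset M\supset M_q$. The second clause of $(1)$ guarantees that the order-$p$ and order-$q$ subgroups sitting inside distinct maximal vertices are never shared: if an order-$p$ subgroup lay in two distinct cyclic subgroups of order $p^2$ or $pq$, those two would intersect non-trivially. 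Hence the connected components are exactly $K_1$, $K_2$ and $P_2$, so $\mathscr I_c(G)$ is a forest, and in particular acyclic.

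For the converse I would argue the contrapositive, assuming $(1)$ fails and producing a $C_3$ (which kills $(4)$). If the second clause fails, two cyclic subgroups $A,B$ of order $p^2$ or $pq$ satisfy $A\cap B\neq\{e\}$; then $C:=A\cap B$ is a non-trivial proper cyclic subgroup distinct from $A$ and $B$, and $\{A,B,C\}$ is a triangle. If instead some maximal proper cyclic subgroup $M$ has order $n\notin\{p,p^2,pq\}$, I would split on the factorization of $n$: when $p^2q\mid n$ the subgroups of $M$ of orders $p^2$, $pq$ and $p$ pairwise intersect; when $n$ has three distinct prime divisors the subgroups of orders $pq$, $pr$, $qr$ pairwise intersect; and when $n=p^{k}$ with $k\geq3$ the nested subgroups of orders $p,p^2,p^3$ form a triangle, using $M$ itself as a vertex when $k=3$, which is legitimate precisely because $M$ is a \emph{proper} subgroup of $G$. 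In every case $\mathscr I_c(G)$ contains $C_3$.

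The main obstacle is this final case analysis, especially the $n=p^{k}$ subcase: unlike the others it can force us to use $M$ itself as one of the three mutually adjacent vertices, so the argument hinges on reading $(1)$ through maximal \emph{proper} cyclic subgroups. This is exactly what makes $G\cong\mathbb Z_{p^3}$, whose graph is the acyclic $K_2$, satisfy $(1)$ rather than violate it (its maximal proper cyclic subgroup has order $p^2$). Keeping the proper/non-proper distinction straight, and verifying in each subcase that the exhibited subgroups are genuinely distinct proper vertices, is where the care is needed.
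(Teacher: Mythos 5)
Your proposal is correct and follows essentially the same route as the paper: the easy chain $(1)\Rightarrow(2)\Rightarrow(3)\Rightarrow(4)$, an appeal to Corollary~\ref{cyclic intersection c1} for $(4)\Rightarrow(2)$, and extracting a $C_3$ from the negation of $(1)$. You execute it with more care than the paper does---making explicit the component structure $K_1$, $K_2$, $P_2$ in $(1)\Rightarrow(2)$, the case analysis $p^3\mid n$, $p^2q\mid n$, $pqr\mid n$ that the paper compresses into one sentence, and the reading of ``maximal cyclic subgroup'' as maximal among \emph{proper} cyclic subgroups---but the skeleton is identical.
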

\begin{proof} Clearly $(1) \Rightarrow (2) \Rightarrow (3) \Rightarrow (4)$.

\noindent  Now assume that $\mathscr I_c(G)$ is acyclic.
Suppose either $G$ has a maximal cyclic subgroup of order $m> p, p^2$, $pq$ and two maximal cyclic subgroups intersect non-trivially, then $\mathscr I_c(G)$ contains $C_3$, which is a contradiction to our assumption. This prove the part $(2) \Rightarrow (1)$.

\noindent Suppose $\mathscr I_c(G)$ is $C_3$-free,
then by Corollary~\ref{cyclic intersection c1}, $girth(\mathscr I_c(G))=\infty$ and so $\mathscr I_c(G)$ is acyclic. This prove the part $(4) \Rightarrow (2)$. Hence the proof.
\end{proof}

\begin{thm}\label{cyclic intersection graph 8}
Let $G$ be a group. Then $\alpha(\mathscr I_c(G))=m$, where $m$ is the number of prime order subgroups of $G$.
\end{thm}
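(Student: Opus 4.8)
The plan is to prove the two inequalities $\alpha(\mathscr I_c(G)) \ge m$ and $\alpha(\mathscr I_c(G)) \le m$ separately: the lower bound by exhibiting one explicit independent set of size $m$, and the upper bound by an injection argument that sends an arbitrary independent set into the family of prime order subgroups.

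For the lower bound, I would take $S_0$ to be the collection of all subgroups of $G$ of prime order. Each member of $S_0$ is a nontrivial proper cyclic subgroup and hence a vertex of $\mathscr I_c(G)$. The point is that $S_0$ is independent: if $A$ and $B$ are distinct subgroups with $|A|$ and $|B|$ prime, then $A\cap B$ is a subgroup of the prime order group $A$, so $A\cap B$ equals either $A$ or $\{e\}$; it cannot equal $A$, for that would force $A\subseteq B$, which is impossible for two distinct groups of prime order. Hence $A\cap B=\{e\}$, so $A$ and $B$ are nonadjacent, and $S_0$ is an independent set of size $m$, giving $\alpha(\mathscr I_c(G))\ge m$.

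For the upper bound, let $S$ be an arbitrary independent set of $\mathscr I_c(G)$. Every vertex $C\in S$ is a nontrivial cyclic subgroup, so it contains at least one subgroup of prime order; fix one such $P(C)\le C$. I claim the map $C\mapsto P(C)$ is injective. Indeed, if $P(C)=P(C')$ for distinct $C,C'\in S$, then $C\cap C'\supseteq P(C)\neq\{e\}$, so $C$ and $C'$ are adjacent, contradicting the independence of $S$. Therefore $|S|$ is at most the number of prime order subgroups, i.e.\ $|S|\le m$, so $\alpha(\mathscr I_c(G))\le m$. Together with the lower bound this yields $\alpha(\mathscr I_c(G))=m$.

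The only step needing care is the assertion that every vertex contains a subgroup of prime order, which is where the argument tacitly uses that the relevant cyclic subgroups are finite (a nontrivial finite cyclic group always has an element, and hence a subgroup, of prime order). I would note that this holds automatically for finite $G$, the intended setting of the later corollaries. Apart from that, the proof is essentially the clean double inequality above, and I expect no further obstacle.
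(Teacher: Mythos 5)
Your proof is correct and follows essentially the same route as the paper: the paper also takes the family of prime-order subgroups as a (maximal) independent set and bounds any other independent set by associating to each of its vertices a prime-order subgroup it contains. Your version is in fact tighter where it matters—you make the injectivity of $C\mapsto P(C)$ explicit, whereas the paper only says ``for each vertex in $\mathscr B$ there corresponds at least one vertex in $\mathscr A$''—and your caveat about finiteness is well taken, since for an infinite group such as $\mathbb Z$ every nontrivial cyclic subgroup contains no prime-order subgroup at all and the stated equality actually fails.
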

\begin{proof}
Let $\mathscr{A}$ be the set of all prime order subgroups of $G$. Clearly $\mathscr{A}$ is a maximal independent set in $\mathscr I_c(G)$. We show that $\alpha(\mathscr I_c(G))=|\mathscr A|$. Let $\mathscr{B}$ be a another one independent set $\mathscr I_c(G)$, which is different from $\mathscr{A}$. Then $\mathscr{B}$
has at least one cyclic subgroup of $G$ of composite order, say $H$. If $|H|=p^2$, then $H$  has a unique subgroup of order $p$. If $|H|\neq p^2$, then $H$  has at least two subgroups of prime orders. In either case, these prime order subgroups are adjacent with $H$ in $\mathscr I_c(G)$.  So for each vertex in $\mathscr{B}$, there corresponds at least one vertex in $\mathscr{A}$. It follows that $|\mathscr{B}|\leq |\mathscr{A}|$. Hence the proof.
\end{proof}


\begin{thm}\label{cyclic intersection graph 300}
Let $G$ be a group. Then
$\Theta(\mathscr I_c(G))=m$, where $m$ is the number of prime order subgroups of $G$.
\end{thm}
\begin{proof} Let $|G|=p_1^{\alpha_1}p_2^{\alpha_2}\ldots p_k^{\alpha_k}$, where $p_i$'s are distinct primes, $\alpha_i\geq 1$.
	For each $i=1$, 2, $\ldots$, $k$, let $t_i$ be the number of subgroups of order $p_i$ and let $H(j,p_i)$, $j=1$, 2, $\ldots$, $t_i$ be a
	subgroup of $G$
	of order $p_i$. For each $j=1$, 2, $\ldots$, $t_i$, let $\mathscr S(j,p_i)$ be the set of all  proper cyclic subgroups of $G$ having $H(j,p_i)$ in common.
	Clearly $\mathscr S(j, p_i)$ forms a clique in $\mathscr{I}_c(G)$ and $\mathscr S:=\{\mathscr S(j,p_i)~|~i=1$, 2, $\ldots$, $k$ and $j=1$, 2, $\ldots$, $t_i\}$
	forms a clique cover of $\mathscr{I}_c(G)$ with  $|\mathscr S|=t_1+t_2+\cdots +t_k$. Therefore, $\Theta(\mathscr{I}_c(G))\leq |\mathscr S|$. Let $\mathscr{T}$ be a clique cover of $\mathscr{I}_c(G)$ such that $\Theta(\mathscr{I}_c(G))=|\mathscr{T}|$. If $|\mathscr{T}|<|\mathscr S|$, then by pigeonhole principle, $\mathscr{T}$ has a clique which contains atleast two subgroups, say $H(j,p_i)$, $H(l,p_r)$, for some $i\neq r$,
	$j\in \{1$, 2, $\ldots$, $t_i\}$, $l\in \{1$, 2, $\ldots$, $t_r\}$, which is not possible, since $H(j,p_i)$ and $H(l,p_r)$ are not adjacent in $\mathscr{I}_c(G)$. So $|\mathscr{T}|=|\mathscr S|=$ the number of prime order subgroups of $G$. A similar argument also works when $G$ is infinite.
\end{proof}

\begin{rem} In \cite{zelinka}, Zelinka proved that for any group $G$, $\alpha(\mathscr{I}(G))=m$, where $m$ is the number of prime order proper subgroups of $G$.
In~\cite{raj_5}, the authors showed that $\Theta(\mathscr I(G))=\alpha(\mathscr{I}(G))$. It is interesting to note that by Theorems~\ref{cyclic intersection graph 8} and~\ref{cyclic intersection graph 300}, we have $\alpha(\mathscr{I}(G))=\alpha(\mathscr I_c(G))$ and $\Theta(\mathscr I(G)) = \Theta(\mathscr I_c(G))$.
\end{rem}
\begin{cor}\label{301}
Let $G$ be a group. Then $\mathscr I_c(G)$ is weakly $\alpha$-perfect.
\end{cor}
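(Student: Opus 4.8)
The plan is to read this off directly from the two preceding theorems, since both the independence number and the clique cover number have just been computed to be the \emph{same} quantity. By definition (as recalled in the introduction), a graph is weakly $\alpha$-perfect precisely when $\alpha(G)=\theta(G)$; so the only task is to exhibit this equality for $\mathscr I_c(G)$.

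Concretely, I would invoke Theorem~\ref{cyclic intersection graph 8}, which gives $\alpha(\mathscr I_c(G))=m$, and Theorem~\ref{cyclic intersection graph 300}, which gives $\Theta(\mathscr I_c(G))=m$, where in both statements $m$ denotes the number of prime order subgroups of $G$. Since the right-hand sides agree, we conclude $\alpha(\mathscr I_c(G))=\Theta(\mathscr I_c(G))$, which is exactly the definition of weak $\alpha$-perfectness. One small bookkeeping point to verify is that the two theorems attach the identical meaning to $m$ (the count of subgroups of $G$ of prime order, with no double-counting or restriction to proper subgroups); a glance at their statements confirms this, so no reconciliation of conventions is needed.

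There is essentially no obstacle here: the corollary is immediate once Theorems~\ref{cyclic intersection graph 8} and~\ref{cyclic intersection graph 300} are in hand, and in particular it requires no new combinatorial or group-theoretic input and no case analysis. The substantive work lives entirely in those two theorems — the construction of the maximal independent set of prime order subgroups, and the matching clique cover $\mathscr S$ together with the pigeonhole lower bound — and the present statement merely records that these two bounds coincide. I would therefore keep the proof to a single sentence combining the two results and appealing to the definition, rather than reproving anything.

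\begin{proof}
This is immediate from Theorems~\ref{cyclic intersection graph 8} and~\ref{cyclic intersection graph 300}: both $\alpha(\mathscr I_c(G))$ and $\Theta(\mathscr I_c(G))$ equal $m$, the number of prime order subgroups of $G$, so $\alpha(\mathscr I_c(G))=\Theta(\mathscr I_c(G))$ and hence $\mathscr I_c(G)$ is weakly $\alpha$-perfect.
\end{proof}
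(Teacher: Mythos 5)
Your proof is correct and is exactly the paper's own argument: the corollary follows immediately by combining Theorems~\ref{cyclic intersection graph 8} and~\ref{cyclic intersection graph 300}, since both give the same value $m$ for $\alpha(\mathscr I_c(G))$ and $\Theta(\mathscr I_c(G))$. Your extra check that $m$ means the same thing in both theorems is a reasonable bit of care, but nothing beyond the paper's one-line proof is needed.
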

\begin{proof}
Proof follows from Theorems~\ref{cyclic intersection graph 8} and \ref{cyclic intersection graph 300}.
\end{proof}

\begin{thm}\label{intersecting graph t24}
 $\mathscr I_c(\mathbb Z_n)$ is regular if and only if $n=p^\alpha$, $\alpha \geq 2$.
\end{thm}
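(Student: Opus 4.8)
The plan is to translate everything into divisor arithmetic. Since $\mathbb Z_n$ is cyclic, it has exactly one subgroup of each order $d\mid n$, so the proper non-trivial cyclic subgroups are in bijection with the divisors $d$ of $n$ satisfying $1<d<n$. The subgroup of order $d$ meets the subgroup of order $d'$ in the subgroup of order $\gcd(d,d')$, so two vertices $d,d'$ are adjacent in $\mathscr I_c(\mathbb Z_n)$ precisely when $\gcd(d,d')>1$, i.e.\ when they share a prime factor. The easy direction is then immediate: if $n=p^\alpha$ with $\alpha\geq 2$, Corollary~\ref{cyclic intersection graph 2} gives $\mathscr I_c(\mathbb Z_n)\cong K_{\alpha-1}$, which is regular.

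For the converse I would compute vertex degrees by complementation. Write $n=p_1^{\alpha_1}\cdots p_k^{\alpha_k}$ and let $\tau(n)=\prod_i(\alpha_i+1)$ be the number of divisors, so the graph has $\tau(n)-2$ vertices. Fix a vertex $d$ and let $I=\{\,i:p_i\mid d\,\}$ be its set of prime divisors. The divisors of $n$ coprime to $d$ are exactly the divisors of $m_I:=\prod_{i\notin I}p_i^{\alpha_i}$, of which there are $\prod_{i\notin I}(\alpha_i+1)$; all of them except $1$ are genuine vertices (they are $>1$ and, since $I\neq\emptyset$, at most $m_I<n$), and none equals $d$. Hence $d$ has $\prod_{i\notin I}(\alpha_i+1)-1$ non-neighbours, so
\[
deg(d)=\big(\tau(n)-3\big)-\Big(\textstyle\prod_{i\notin I}(\alpha_i+1)-1\Big)=\tau(n)-2-\prod_{i\notin I}(\alpha_i+1).
\]
Thus $\mathscr I_c(\mathbb Z_n)$ is regular if and only if $\prod_{i\notin I}(\alpha_i+1)$ takes the same value for every prime-support $I$ realised by a vertex.

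Now suppose $n$ has at least two distinct prime divisors ($k\geq 2$). Each singleton support $\{i\}$ is realised by $d=p_i$ (note $p_i<n$ because some other prime power is present), giving the value $\prod_{j\neq i}(\alpha_j+1)$; equating these for two indices forces $\alpha_i=\alpha_j$, so regularity first forces all exponents equal, say $\alpha_i=\alpha$. The invariant then becomes $(\alpha+1)^{\,k-|I|}$, which, as $\alpha+1\geq 2$, is constant only if every realised support has the same cardinality. But the singleton $\{1\}$ is always realised, and a support of size $\geq 2$ appears as soon as the vertex $p_1p_2$ exists, i.e.\ whenever $p_1p_2<n=(p_1\cdots p_k)^\alpha$; this holds in every case except $k=2,\ \alpha=1$. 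So outside that single shape we obtain two vertices of different degree, forcing $n=p^\alpha$.

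The main obstacle is exactly the exceptional shape $n=pq$. There the only vertices are the order-$p$ and order-$q$ subgroups, which are non-adjacent, so $\mathscr I_c(\mathbb Z_{pq})$ is the edgeless graph on two vertices and is (trivially) $0$-regular. Consequently the statement holds as written only under the convention that an edgeless graph is not counted as regular (equivalently, that one asks for $r$-regularity with $r\geq 1$), and I would make this convention explicit; with it in force, $n=pq$ is excluded and the characterisation $n=p^\alpha$, $\alpha\geq 2$, stands. The remaining care is routine: verifying the degree formula in the small cases (for $\alpha=2$ it yields the single-vertex graph $K_1$) and confirming that the comparison vertices $p_i$ and $p_1p_2$ really lie strictly between $1$ and $n$.
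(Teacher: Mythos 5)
Your proof is correct, and it is in fact more careful than the paper's own. Both arguments rest on the same underlying degree computation: your formula $\deg(d)=\tau(n)-2-\prod_{i\notin I}(\alpha_i+1)$, derived by counting non-neighbours as coprime divisors, is exactly the formula the paper imports from Chakrabarty \emph{et al.}, so the forward direction and the case of some exponent $\alpha_i>1$ run along the same lines in both proofs (your extra step, forcing all exponents to be equal before analysing supports, is a slightly more systematic organisation of the same comparison). The genuine difference is the endgame in the square-free case. The paper compares the subgroup $K$ of prime-power order with the subgroup $B$ of order $p_1\cdots p_{i-1}p_{i+1}\cdots p_k$ and asserts $\deg(B)=2^k-4=2(2^{k-1}-2)>2^{k-1}-2=\deg(K)$; this strict inequality fails precisely when $k=2$, where both degrees are $0$. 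That is not a removable slip: for $n=pq$ the graph $\mathscr I_c(\mathbb Z_{pq})$ consists of two non-adjacent vertices, which is $0$-regular under the paper's own definition (``all the vertices have the same degree''), so the theorem as stated is false at $n=pq$. You isolate exactly this exceptional shape, check that every other configuration with $k\geq 2$ yields two vertices of different degree, and observe that the stated classification survives only under the convention that ``regular'' means $r$-regular with $r\geq 1$ (equivalently, that edgeless graphs are excluded). Making that convention explicit is the right call; with it in place your argument, unlike the paper's, is complete.
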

\begin{proof}
Let $n=p_1^{\alpha_1}p_2^{\alpha_2}\ldots p_k^{\alpha_k}$, where ${p_i}'$s are distinct primes, $\alpha_i \geq 1$. We need to consider the following cases.

\noindent \textbf{Case 1:} Let $k\geq 2$. It is shown in \cite[p. 5387]{Chakara}, that for any subgroup $H$ of $\mathbb Z_n$, if $|H|=p_{i_1}^{t_{i_1}}p_{i_2}^{t_{i_2}}\ldots p_{i_r}^{t_{i_r}}$,
then $\deg(H)=[\tau(n)-\displaystyle\prod_{\substack{j=1\\ j\notin \{i_1$, $i_2$, $\ldots$, $i_r\}}}^n(\alpha_i+1)]-2$;
if $|H|=p_1^{\alpha_{i_1}}p_2^{\alpha_{i_2}}\ldots p_k^{\alpha_{i_k}}$, then $\deg(H)=\tau(n)-3$.

Now, let $K$ be the subgroup of $\mathbb Z_n$ of order $p_r^{\alpha_r}$.
If some $\alpha_i>1$, let $A$ be a subgroup of $\mathbb Z_n$ of order $p_1^{\alpha_{i_1}}p_2^{\alpha_{i_2}}\ldots p_k^{\alpha_{i_k}}$, where
$0\leq \alpha_{i_t}\leq \alpha_t$.
Then $\deg(K)=\tau(n)-2-\displaystyle\prod_{\substack{i=1\\ i\neq r}}^k (\alpha_i+1)$ and $\deg(A)=\tau(n)-3$.
So $\deg(A)>\deg(K)$. Therefore, $\mathscr I_c(\mathbb Z_n)$ is not regular.
If $\alpha_i=1$ for every $i=1$, 2, $\ldots$, $k$,
then let $B$ be the subgroup of $\mathbb Z_n$ of order $p_1p_2\ldots p_{i-1}p_{i+1}\ldots p_k$. Then $\deg(B)=2^k-4$ and
$\deg(K)=2^{k-1}-2$. But $\deg(B)=2^k-4=2(2^{k-1}-2)>2^{k-1}-2=\deg(K)$. Therefore, $\mathscr I_c(G)$ is not regular.

\noindent \textbf{Case 2:} $k=1$, then by Theorem~\ref{cyclic intersection graph 2}, $\mathscr I_c(\mathbb Z_n)\cong K_{\alpha-1}$, which is regular.

Combining the above two cases we get the result.
\end{proof}

\begin{thm}\label{intersecting graph t22}
 Let $n=p_1^{\alpha_1}p_2^{\alpha_2}\ldots p_k^{\alpha_k}$, where ${p_i}'$s are distinct primes, $\alpha_i \geq 1$.
Then
\begin{align*}
\gamma(\mathscr I_c(\mathbb Z_n))&=\left\{
\begin{array}{ll}
		1, & \mbox{if~ } \alpha_i>1~\mbox{for some}~i; \\
		2, & \mbox{if~ } \alpha_i=1~\mbox{for every}~i.
	\end{array}
\right.
\end{align*}
\end{thm}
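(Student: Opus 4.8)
The plan is to translate the problem into divisor language. Since $\mathbb{Z}_n$ is cyclic, its proper non-trivial cyclic subgroups are in bijection with the divisors $d$ of $n$ satisfying $1<d<n$; writing $H_d$ for the subgroup of order $d$, we have $H_d\cap H_{d'}=H_{\gcd(d,d')}$, so $H_d$ and $H_{d'}$ are adjacent in $\mathscr I_c(\mathbb Z_n)$ precisely when $\gcd(d,d')>1$. Consequently a collection of subgroups dominates $\mathscr I_c(\mathbb Z_n)$ if and only if the primes occurring among the corresponding divisors meet a prime factor of every other proper non-trivial divisor. Throughout I write $\mathrm{rad}(n)=p_1p_2\cdots p_k$ for the product of the distinct primes dividing $n$.

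First I would handle the case where $\alpha_i>1$ for some $i$, that is, $n$ is not squarefree. Here $1<\mathrm{rad}(n)<n$, so $H_{\mathrm{rad}(n)}$ is a vertex. Since $\mathrm{rad}(n)$ is divisible by every prime $p_j$, each other proper non-trivial divisor $d'$ (which has at least one prime factor) satisfies $\gcd(\mathrm{rad}(n),d')>1$; hence $H_{\mathrm{rad}(n)}$ is adjacent to all other vertices and by itself forms a dominating set. As the graph is non-empty, this gives $\gamma(\mathscr I_c(\mathbb Z_n))=1$.

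Next is the squarefree case $n=p_1\cdots p_k$ with $k\geq 2$ (for $k=1$ the graph has no vertices). For the lower bound $\gamma\geq 2$, observe that each prime $p_j$ yields a vertex $H_{p_j}$, and a single subgroup $H_d$ can dominate the whole graph only if $\gcd(d,p_j)>1$ for every $j$ with $p_j\neq d$, which forces every prime to divide $d$; since $n$ is squarefree this means $n=\mathrm{rad}(n)\mid d$, impossible for a proper divisor. Hence no vertex is universal and $\gamma\geq 2$. For the upper bound I would exhibit the explicit dominating set $\{H_{p_1},\,H_{n/p_1}\}$: both are proper and non-trivial (using $k\geq 2$), and their orders $p_1$ and $n/p_1=p_2\cdots p_k$ together involve all $k$ primes, so any remaining vertex $H_{d'}$, possessing some prime factor $p_j$, is adjacent to whichever of the two subgroups contains $p_j$. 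Thus $\gamma\leq 2$, and combining the two bounds yields $\gamma=2$.

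The routine but essential structural facts—the divisor–subgroup correspondence and the reduction of adjacency to $\gcd>1$—carry almost all the weight, so the calculations themselves are light. The only genuinely forcing step is the lower bound in the squarefree case, where one must notice that a universal vertex would have to be divisible by every prime and hence coincide with $\mathbb Z_n$ itself. I expect this observation, together with the care needed to verify that every subgroup invoked is simultaneously proper and non-trivial (so that it is actually a vertex), to be the main, albeit modest, obstacle.
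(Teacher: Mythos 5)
Your proof is correct and follows essentially the same route as the paper: in the non-squarefree case both use the subgroup of order $p_1p_2\cdots p_k$ as a universal vertex, and in the squarefree case both exhibit a two-element dominating set whose orders jointly cover all the primes (your $\{H_{p_1}, H_{p_2\cdots p_k}\}$ versus the paper's $\{H_{p_1\cdots p_{k-1}}, H_{p_2\cdots p_k}\}$) and then rule out a single dominating vertex. In fact your write-up is slightly more complete than the paper's, since you spell out why no universal vertex can exist when $n$ is squarefree (the paper merely asserts this) and you flag the degenerate case $k=1$, where the graph has no vertices at all.
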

\begin{proof}
 If $\alpha_i>1$, for some $i\in\{1$, 2, $\ldots$, $k\}$, then $\mathbb Z_n$ has a subgroup of order $p_1p_2\ldots p_k$, and
it intersects non-trivially with every other proper subgroups of $\mathbb Z_n$,
so $\gamma(\mathscr I_c(\mathbb Z_n))=1$. If $\alpha_i=1$, for every $i=1$, 2, $\ldots$, $n$, then $\mathbb Z_n$ has subgroups
of orders $p_1p_2\ldots p_{k-1}$ and $p_2p_3\ldots p_k$
respectively, and these
two subgroups forms a dominating set in $\mathscr I_c(\mathbb Z_n)$. So we have $\gamma(\mathscr I_c(\mathbb Z_n))\leq 2$. Suppose $\gamma(\mathscr I_c(\mathbb Z_n))<2$, then there exist proper
a subgroup,
which has non-trivial intersection with every other proper subgroups of $\mathbb Z_n$; but it is not possible, since $\alpha_i=1$,
for every $i=1$, 2, $\ldots$, $n$.
Therefore, $\gamma(\mathscr I_c(\mathbb Z_n))=2$.
\end{proof}

\end{document}